\documentclass[11pt, letterpaper, oneside]{amsart}
\usepackage{tikz}
\usepackage{hyperref}
\usepackage{amsrefs}
\usepackage{amsthm}
\usepackage{amssymb}
\usepackage{mathtools}
\usepackage{float}

\newtheorem{theorem}{Theorem}[section]

\newtheorem{lemma}[theorem]{Lemma}
\newtheorem{proposition}[theorem]{Proposition}
\newtheorem{conjecture}[theorem]{Conjecture}

\theoremstyle{definition}

\newtheorem{remark}[theorem]{Remark}
\numberwithin{equation}{section}

\newcommand{\qb}[2]{\left[{{#1} \atop {#2}} \right]_q}

\begin{document}

\title[A requested analytic proof of an identity]{A requested analytic proof of an identity of Dixit, Kumar, and Srivastava}
\author{Philip Cuthbertson}
\address{Department of Mathematical Sciences\\ Michigan Technological University\\ Houghton, MI 49931} \email{pecuthbe@mtu.edu}

\begin{abstract}
    Recently Dixit, Kumar, and Srivastava investigated what they called Rascoe and non-Rascoe partitions. These are defined to be the set of distinct partitions where the length of the partition is a part of the partition and is not a part respectively. In this note we provide a $q$-series theoretic proof of two identities regarding the generating function for unrestricted Rascoe and non-Rascoe partitions fulfilling a request of Dixit, Kumar, and Srivastava. We also prove a conjecture of Beck relating non-Rascoe partitions and the rank of a partition.
\end{abstract}

\maketitle

\section{Introduction}
    Following \cite{Dixit-Kumar-Srivastava}, we define $a(n)$ as the number of partitions of $n$, $\lambda$, into distinct parts such that the number of parts of $\lambda$ is a part of $\lambda$. Additionally, we also define $b(n)$ as the number of partitions of $n$ into distinct parts that do not contain the length as a part. In \cite{Dixit-Kumar-Srivastava} Dixit, Kumar, and Srivastava coined the names \textit{Rascoe partitions} and \textit{non-Rascoe partitions} for these respectively. They also investigated \textit{unrestricted Rascoe partitions} and \textit{unrestricted non-Rascoe partitions} which are the same definition except without the restriction on the parts being distinct. We define $c(n)$ and $e(n)$ to be the analogous counts of these two sets of partitions. They also proved the following theorem for the generating functions of $c(n)$ and $e(n)$ combinatorially and then asked for a purely $q$-series proof of the identity. In this note we fulfill this request using well known identities such as the $q$-Pascal identity, $q$-binomial theorem, $q$-Gauss identity, and a lesser known identity involving a terminating $_2\phi_1$ basic hypergeometric series.
\begin{theorem}[Dixit-Kumar-Srivastava]\label{mainTheorem}
    For $|q| < 1$, we have
    \[\sum_{n = 0}^\infty c(n)q^n = q + \sum_{n = 2}^\infty \sum_{m = 0}^{n} \qb{2n - m - 2}{n - m - 1} \frac{q^{mn + 2n - 1}}{(q; q)_m} = \frac{q}{(q^2; q)_\infty}.\]
    Additionally,
    \[\sum_{n = 0}^\infty e(n)q^n = 1 + \frac{q^2}{1 - q} + \sum_{n = 2}^\infty \sum_{m = 0}^{n} \qb{2n - m - 2}{n - m} \frac{q^{mn + n}}{(q; q)_m} = \frac{1 - q + q^2}{(q; q)_\infty}.\]
\end{theorem}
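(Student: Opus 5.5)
The plan is to prove the first identity directly and then deduce the second from it, using the relation $c(n)+e(n)=p(n)$ for $n\ge 1$. On the level of closed forms,
\[
\frac{q}{(q^2;q)_\infty}+\frac{1-q+q^2}{(q;q)_\infty}=\frac{q(1-q)+1-q+q^2}{(q;q)_\infty}=\frac{1}{(q;q)_\infty}.
\]
So for the second identity it suffices to show that the two double sums (with their boundary terms $q$ and $1+\frac{q^2}{1-q}$) add up to $1/(q;q)_\infty$. I would combine the summands termwise with the $q$-Pascal identity. Multiplying the first summand by $q^{n-1}$ puts both on the common power $q^{mn+n}$. The first-type bracket $\qb{2n-m-2}{n-m-1}$ then pairs with the second-type bracket $\qb{2n-m-2}{n-m}$, after a suitable shift of $n$ or of the power of $q$, to produce a single bracket such as $\qb{2n-m-1}{n-m}$. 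The resulting single double sum should then be recognizable as a Durfee-type or rank-type expansion of $1/(q;q)_\infty$, again via the $q$-binomial theorem. I would check the exact bookkeeping of the $n=0,1$ boundary terms carefully.

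For the first identity, I fix $n$ and evaluate the inner sum over $m$ in closed form. I rewrite
\[
\qb{2n-m-2}{n-m-1}=\frac{(q;q)_{2n-m-2}}{(q;q)_{n-m-1}(q;q)_{n-1}}
\]
and apply the reversal formulas
\[
(q;q)_{N-m}=\frac{(q;q)_N\,(-1)^m q^{\binom{m}{2}-Nm}}{(q^{-N};q)_m},
\]
with $N=2n-2$ and with $N=n-1$. This converts the inner sum into
\[
\qb{2n-2}{n-1}\sum_{m\ge 0}\frac{(q^{1-n};q)_m}{(q;q)_m\,(q^{2-2n};q)_m}\,q^{\alpha m}
\]
for an explicit exponent $\alpha$, which I expect to be linear in $n$. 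This is a terminating $_2\phi_1$ with one numerator parameter tending to $0$ (equivalently, a terminating $_1\phi_1$). The step I expect to be the main obstacle is evaluating it in product form. I would first try the $q$-Gauss / $q$-Chu--Vandermonde sum applied to a limiting case, possibly after a Heine-type transformation. Failing that, I would use the lesser-known terminating $_2\phi_1$ evaluation referred to in the introduction. Because the denominator parameter $q^{2-2n}$ is "twice" the numerator parameter $q^{1-n}$, a quadratic-type (Andrews-style) terminating evaluation is what I would look for. The expected outcome is something like a single monomial times a ratio of $q$-factorials, perhaps vanishing for half the values of $n$.

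Once the inner sum is a product, I sum over $n\ge 2$ together with the extra term $q$. The aim is to recognize the result as $q\sum_{k\ge 0} q^{k}/(q^2;q)_k$-type series or an Euler-type expansion. By the $q$-binomial theorem or Euler's identity $\sum_k z^k/(q;q)_k=1/(z;q)_\infty$ with $z=q^2$ after shifting, this gives $q/(q^2;q)_\infty$.

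If the product evaluation of the inner sum proves awkward, my fallback is to interchange the order of summation. I would fix $m$, set $k=n-m-1$, and use
\[
\sum_{k\ge0}\qb{m+2k}{k}z^k,
\]
or the generating function $\sum_k \qb{N+k}{k}z^k=1/(z;q)_{N+1}$ after splitting the bracket with $q$-Pascal. I would then try to sum in $m$ by the $q$-binomial theorem.
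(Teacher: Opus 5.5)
Your proposal is correct, and its architecture matches the paper's: prove the first identity, then show that the two double sums together with $1+q+\frac{q^2}{1-q}$ sum to $1/(q;q)_\infty$. The $q$-Pascal step works as you guess, with no shift. Pulling $q^{mn+n}$ out of the first summand leaves $q^{n-1}\qb{2n-m-2}{n-m-1}$. The second form of Theorem \ref{qPascal} with $a=2n-m-1$, $b=n-m$ merges this with $\qb{2n-m-2}{n-m}$ into $\qb{2n-m-1}{n-m}$. Also $q+\frac{q^2}{1-q}=\frac{q}{1-q}$ is exactly the $n=1$ term of the merged sum.

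You depart from the paper in evaluating the inner sums. Several of your predictions there are off, although the first tool you name does the job. Your reversal gives $\alpha=1$, not something linear in $n$, because $q^{mn}$ cancels against the $q^{-(n-1)m}$ from the two reversals:
\[
\sum_{m=0}^{n}\qb{2n-m-2}{n-m-1}\frac{q^{mn+2n-1}}{(q;q)_m}=q^{2n-1}\qb{2n-2}{n-1}\sum_{m=0}^{n-1}\frac{(q^{1-n};q)_m}{(q;q)_m\,(q^{2-2n};q)_m}\,q^{m}.
\]
Set $b=0$ in $q$-Chu--Vandermonde, ${}_2\phi_1(q^{-N},b;c;q,q)=b^N(c/b;q)_N/(c;q)_N$. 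Both sides are polynomials in $b$, and $b^N(c/b;q)_N\to(-c)^Nq^{\binom N2}$, so
\[
\sum_{k=0}^{N}\frac{(q^{-N};q)_k}{(q;q)_k\,(c;q)_k}\,q^{k}=\frac{(-c)^N q^{\binom N2}}{(c;q)_N}.
\]
With $N=n-1$ and $c=q^{2-2n}$ this equals $(q;q)_{n-1}/(q;q)_{2n-2}$. Hence the inner sum is $q^{2n-1}/(q;q)_{n-1}$. It never vanishes, no quadratic evaluation is needed, and Euler's identity with $z=q^2$ finishes.

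For the second identity, ``recognizing a Durfee-type or rank-type expansion'' is not an argument, and it is not what happens. Apply the same reversal to $\qb{2n-m-1}{n-m}$ instead. The inner sum becomes
\[
q^n\qb{2n-1}{n}\sum_m\frac{(q^{-n};q)_m}{(q;q)_m(q^{1-2n};q)_m}q^m.
\]
The same formula with $N=n$ and $c=q^{1-2n}$ turns this into $q^n/(q;q)_n$. Euler's identity with $z=q$ then gives $1/(q;q)_\infty$.

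The paper instead reindexes by $M=N-m$ (with $N=n-1$), so the inner sums become terminating series in $(q^{-N};q)_M(q^{N+1};q)_M$ of ${}_3\phi_1$ shape. It converts these into ${}_2\phi_1$'s using the transformation of Theorem \ref{finite2phi1}. It then evaluates them by $q$-Gauss after deforming with an auxiliary parameter $x$ and letting $x\to1$ (Lemmas \ref{lemma1} and \ref{lemma2}). The ${}_2\phi_1(q^{-N},q^{N+1};q;q,1)$ appearing there is itself the other form of $q$-Chu--Vandermonde.

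Completed as above, your route is shorter and uniform:
\begin{itemize}
\item one summation formula handles both identities;
\item the termwise closed forms $q^{2n-1}/(q;q)_{n-1}$ and $q^n/(q;q)_n$ appear explicitly;
\item neither the ${}_3\phi_1$ transformation nor the $x\to1$ deformation is needed.
\end{itemize}
The paper's route stays within the standard nondegenerate forms of the identities it set out to showcase, at the cost of those extra steps. You can drop your fallback of interchanging the order of summation.
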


Immediate from the definitions and the rightmost generating functions we have that $c(n) + e(n) = p(n)$ for all $n$. As such, our $q$-series proofs will be split into showing the identity for $\sum c(n) q^n$ and then showing that $\sum (c(n) + e(n)) q^n$ is equal to the partition generating function.

In Section 2 we will recall some classical results and definitions for the notation used throughout, Section 3 will be dedicated to a proof of Theorem \ref{mainTheorem}, in Section 4 we give a combinatorial proof of a related conjecture of Beck \cite{Beck-OEIS}, and in Section 5 we provide a refined congruence conjecture and discuss future work.

\section{Background}

The sequence $\lambda = (\lambda_1, \lambda_2, \ldots, \lambda_m)$ is called an \textit{integer partition} of $n$ if $\lambda$ is a weakly decreasing sequence of positive integers that sum to $n$. We call $\ell(\lambda) := m$ the \textit{length} and $|\lambda| := n$ the \textit{size} of $\lambda$. Additionally we let $p(n)$ count the number of partitions of $n$. We will also make use of some standard notation that we will use and provide several identities that will be used in the algebraic proofs. The $q$-Pochhammer symbol is defined as
\begin{align*}
    (a; b)_n & := \prod_{k=0}^{n-1}(1-ab^k),\\
    (a; b)_\infty & := \prod_{k=0}^{\infty}(1-ab^k),\\
    (a_1, a_2, \dots, a_r; q)_n & := (a_1; q)_n (a_2; q)_n \cdots (a_r; q)_n.
\end{align*}
It will be useful to also have the $q$-binomial coefficient which is given by
\[\qb{a}{b} := \frac{(q; q)_a}{(q; q)_b(q; q)_{a-b}}.\]
We will also make use of the $q$-hypergeometric series which is defined by
\[{}_r\phi_s \left[ \begin{matrix} a_1, a_2, \dots, a_r \\ b_1, b_2, \dots, b_s \end{matrix} ; q, z \right] := \sum_{n=0}^\infty {\frac {(a_1, a_2, \ldots ,a_r; q)_n}{(b_1, b_2,\ldots, b_s, q;q)_n}}\left((-1)^n q^{n \choose 2}\right)^{1 + r - s} z^n,\]
for $r$ and $s$ non-negative integers.

\begin{theorem}[$q$-Pascal \cite{Gasper-Rahman}*{p. 353, Eq. (I.45)}]\label{qPascal}
    For $a, b, q \in \mathbb C$,
    \[\qb{a}{b} = q^b \qb{a-1}{b} + \qb{a}{b - 1} = \qb{a-1}{b} + q^{a - b}\qb{a - 1}{b - 1}.\]
\end{theorem}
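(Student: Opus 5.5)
The plan is to reduce the identity to a one-line rational identity in $q^a$ and $q^b$, after first giving the $q$-binomial coefficient a meaning for complex $a,b$. For $|q|<1$ I would set
\[(q;q)_x := \frac{(q;q)_\infty}{(q^{x+1};q)_\infty}, \qquad x\in\mathbb C,\]
which agrees with the finite product when $x$ is a non-negative integer. I would then take $\qb{a}{b}$ to be defined by the displayed quotient whenever the Pochhammer symbols in the denominator are nonzero. The definition gives the basic shift rule
\[(q;q)_x=(1-q^x)(q;q)_{x-1}.\]
This rule is the only input needed. Every term of the identity becomes $\qb{a}{b}$ times an explicit rational function of $q^a,q^b$, so it suffices to prove the identity for generic complex $a,b$ (where no denominator vanishes) and then extend to the remaining admissible values by continuity.

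For the second equality the computation is as follows. Applying the shift rule in the numerator and in one denominator factor gives
\[\frac{\qb{a-1}{b}}{\qb{a}{b}}=\frac{1-q^{a-b}}{1-q^a},\qquad \frac{\qb{a-1}{b-1}}{\qb{a}{b}}=\frac{1-q^{b}}{1-q^a}.\]
Hence the right-hand side equals $\qb{a}{b}$ times
\[\frac{(1-q^{a-b})+q^{a-b}(1-q^b)}{1-q^a}=1.\]
The first equality uses the same two ratios:
\[q^b\,\frac{1-q^{a-b}}{1-q^a}+\frac{1-q^b}{1-q^a}=\frac{q^b-q^a+1-q^b}{1-q^a}=1,\]
\emph{provided} the second summand is $\qb{a-1}{b-1}$. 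This is the standard form of Gasper--Rahman (I.45).

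The main obstacle is the second summand exactly as printed, $\qb{a}{b-1}$. By the shift rule
\[\frac{\qb{a}{b-1}}{\qb{a}{b}}=\frac{1-q^b}{1-q^{a-b+1}},\]
which differs from $(1-q^b)/(1-q^a)$ unless $b=1$ (the two agree when $q^{a}=q^{a-b+1}$). So the first equality, as worded, holds only in degenerate cases. A concrete check is $a=3$, $b=2$: the left side is $\qb{3}{2}=1+q+q^2$, while $q^2\qb{2}{2}+\qb{3}{1}=1+q+2q^2$. My proof would therefore establish the second equality verbatim, together with the first equality with $\qb{a-1}{b-1}$ in the last slot. I would remark that this corrected version is the one used later in the paper and the one in the cited source. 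I would also note that the printed version fails for $a=3$, $b=2$, so it cannot be proved as written.
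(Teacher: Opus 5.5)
The paper gives no proof of this statement; it is quoted from Gasper--Rahman as background, so there is no argument of the paper's to compare against. Your direct verification is the standard one, and it is correct. The ratios $\qb{a-1}{b}/\qb{a}{b}=(1-q^{a-b})/(1-q^{a})$ and $\qb{a-1}{b-1}/\qb{a}{b}=(1-q^{b})/(1-q^{a})$ are right, and both combinations collapse to $1$.

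Your diagnosis of the first equality is also right. As printed, with $\qb{a}{b-1}$ as the second summand, it is false: for example $q^{2}\qb{2}{2}+\qb{3}{1}=1+q+2q^{2}\neq 1+q+q^{2}=\qb{3}{2}$. The intended term is $\qb{a-1}{b-1}$. One correction to your closing remark: the paper never uses the first equality in either form. In Section 3 it invokes only the second equality, with $a=2n-m-1$ and $b=n-m$, and that one is stated correctly. So the typo has no downstream effect.

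There are two small tidy-ups.

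First, your extension $(q;q)_x=(q;q)_\infty/(q^{x+1};q)_\infty$ requires $|q|<1$, while the statement allows every $q\in\mathbb C$. For non-negative integers $a,b$ you do not need it. This is the only case the paper's finite-product definition covers, and the only case it uses. The shift rule is immediate from the finite product. After multiplying through by $(q;q)_b(q;q)_{a-b}/(q;q)_{a-1}$, the second equality reduces to $1-q^{a}=(1-q^{a-b})+q^{a-b}(1-q^{b})$. The corrected first equality reduces to $1-q^{a}=q^{b}(1-q^{a-b})+(1-q^{b})$. Both are polynomial identities valid for all $q$.

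Second, the boundary case $b=0$, which the paper hits at $m=n$, needs $\qb{a-1}{-1}=0$. There $(q;q)_{-1}$ is a pole, not a zero, so your condition that the denominators be nonzero is not the right one. It is cleaner to define $1/(q;q)_x:=(q^{x+1};q)_\infty/(q;q)_\infty$, which is entire in $x$ and vanishes at $x=-1$. With that definition your continuity argument yields the required value automatically.
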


\begin{theorem}[$q$-Binomial Theorem \cite{Gasper-Rahman}*{p. 354, Eq. (II.3)}]\label{qBinomial}
Given $|q| < 1$ and $|z| < 1$
    \[{}_1\phi_0 \left[ \begin{matrix} a \\ {} \end{matrix} ; q, z \right] = \sum_{n=0}^{\infty} \frac{(a;q)_n}{(q;q)_n} z^n = \frac{(az; q)_\infty}{(z; q)_\infty}.\]
\end{theorem}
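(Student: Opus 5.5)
The plan is the classical functional-equation argument. Set
\[
f(z) := \sum_{n=0}^{\infty} \frac{(a;q)_n}{(q;q)_n} z^n .
\]
First I check that this series converges absolutely and defines an analytic function on $|z|<1$. Since $|q|<1$, the ratio of consecutive coefficients is
\[
\frac{1-aq^n}{1-q^{n+1}} \to 1,
\]
so the radius of convergence is at least $1$. (If $a=q^{-k}$ the series terminates and there is nothing to check.) In particular $f$ is continuous at $z=0$ with $f(0)=1$.

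Next I derive a first-order $q$-difference equation for $f$, comparing coefficients of $z^n$ on both sides. From $(a;q)_n=(a;q)_{n-1}(1-aq^{n-1})$ and $(q;q)_n=(q;q)_{n-1}(1-q^n)$, the $z^n$ coefficient of $f(z)-f(qz)$ is
\[
\frac{(a;q)_n}{(q;q)_n}(1-q^n) = \frac{(a;q)_{n-1}}{(q;q)_{n-1}}(1-aq^{n-1}).
\]
This is exactly the $z^n$ coefficient of $z f(z) - a z f(qz)$. Hence
\[
f(z) - f(qz) = z f(z) - az f(qz), \qquad\text{that is,}\qquad (1-z)f(z) = (1-az) f(qz).
\]
Since $|z|<1$, we may divide by $1-z$ to get $f(z)=\frac{1-az}{1-z}f(qz)$.

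I then iterate this relation $N$ times. Each $q^j z$ also lies in the unit disc, so no division by zero occurs, and
\[
f(z) = \frac{(az;q)_N}{(z;q)_N}\, f(q^N z).
\]
Finally I let $N\to\infty$. Because $q^N z\to 0$ and $f$ is continuous at $0$, we have $f(q^N z)\to f(0)=1$. The finite products converge to $(az;q)_\infty$ and $(z;q)_\infty$, since $\sum |q|^k<\infty$. The limit $(z;q)_\infty$ is nonzero for $|z|<1$. Together these give $f(z)=(az;q)_\infty/(z;q)_\infty$, which is the claimed identity, and it agrees with the definition of ${}_1\phi_0$ since for $r=1,s=0$ the factor $\bigl((-1)^nq^{\binom n2}\bigr)^{1+r-s}$ is $\bigl((-1)^nq^{\binom n2}\bigr)^{2}$.

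The main obstacle is bookkeeping rather than depth. The coefficient comparison needs the right index shift, and the limiting step needs justification: uniform convergence of the infinite products, and continuity of $f$ at $0$. Both follow from the absolute convergence established in the first step.

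Before writing this up I must resolve a discrepancy with the ${}_r\phi_s$ normalization as the paper defines it. For $r=1,s=0$ the extra factor is $\bigl((-1)^nq^{\binom n2}\bigr)^{2}=q^{2\binom n2}$, which is not identically $1$. The factor is trivial only under the standard Gasper--Rahman exponent $1+s-r=0$, so the definition as printed appears to have $r$ and $s$ transposed. The displayed middle sum, which has no such factor, is what I prove. I will note that this matches ${}_1\phi_0$ only under the standard normalization, so the paper's exponent should be corrected to $1+s-r$.
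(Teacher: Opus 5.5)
Your argument is correct. The paper gives no proof of this statement: it quotes it from Gasper--Rahman (II.3) as a standard tool, so there is no competing route to compare. Yours is the classical $q$-difference-equation proof. Gasper--Rahman, \S1.3, reach the same relation $(1-z)f(z)=(1-az)f(qz)$ via the auxiliary series with $a$ replaced by $aq$; your direct coefficient comparison gets there in one step. In the limiting step, pointwise convergence of the finite products is all you need, not uniform convergence.

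Your remark about the normalization is also right. With the printed exponent $1+r-s$, the ${}_1\phi_0$ would carry the factor $q^{2\binom{n}{2}}$ and the first equality would fail, so the exponent should read $1+s-r$. The paper's own computations use the standard convention. For instance, the ${}_3\phi_1$ recognized at the end of the proof of Theorem~\ref{mainTheorem} carries the factor $(-1)^M q^{-\binom{M}{2}}$, i.e.\ exponent $-1=1+s-r$. Moreover, every application of this theorem in the paper (with $a=0$ and $z=q^2$ or $z=q$) uses only the middle-sum form, which is exactly what you prove. Delete the sentence in your third paragraph asserting agreement with the printed ${}_1\phi_0$, since it contradicts your final paragraph.
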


\begin{theorem}[$q$-Gauss sum \cite{Gasper-Rahman}*{p. 354, Eq. (II.8)}]\label{qGauss}
Given $|q| < 1$ and $|c/(ab)| < 1$
    \[{}_2\phi_1 \left[ \begin{matrix} a, b \\ c \end{matrix} ; q, c/(ab) \right] = \frac{(c/a; q)_\infty (c/b; q)_\infty}{(c; q)_\infty (c/(ab); q)_\infty}.\]
\end{theorem}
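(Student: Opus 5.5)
We will derive Theorem \ref{qGauss} from the $q$-binomial theorem (Theorem \ref{qBinomial}), using it twice: first to obtain Heine's transformation of a ${}_2\phi_1$ series, and then to evaluate the transformed series at the special argument $z = c/(ab)$. Throughout we first assume $|q|<1$, $|z|<1$ and $|b|<1$, with $c \neq q^{-k}$ for all $k\ge 0$, and we remove the restriction on $b$ at the end.

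\emph{Step 1 (Heine's transformation).} For each $n$ we write
\[\frac{(b;q)_n}{(c;q)_n} = \frac{(b;q)_\infty}{(c;q)_\infty}\cdot\frac{(cq^n;q)_\infty}{(bq^n;q)_\infty}.\]
We expand the last quotient by Theorem \ref{qBinomial}, taking $a = c/b$ and $z = bq^n$:
\[\frac{(cq^n;q)_\infty}{(bq^n;q)_\infty} = \sum_{m\ge 0}\frac{(c/b;q)_m}{(q;q)_m}b^m q^{nm}.\]
Substituting this into ${}_2\phi_1(a,b;c;q,z)$ gives a double sum in $n$ and $m$. It converges absolutely because $|z|<1$ and $|b|<1$, so we may interchange the order of summation. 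The inner sum over $n$ is then
\[\sum_{n}\frac{(a;q)_n}{(q;q)_n}(zq^m)^n = \frac{(azq^m;q)_\infty}{(zq^m;q)_\infty},\]
again by Theorem \ref{qBinomial}. Writing $(azq^m;q)_\infty = (az;q)_\infty/(az;q)_m$, and similarly for $(zq^m;q)_\infty$, we arrive at
\[{}_2\phi_1\left[\begin{matrix} a,b\\ c\end{matrix};q,z\right] = \frac{(b,az;q)_\infty}{(c,z;q)_\infty}\,{}_2\phi_1\left[\begin{matrix} c/b,\,z\\ az\end{matrix};q,b\right].\]

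\emph{Step 2 (specialization).} Now set $z = c/(ab)$, which satisfies $|z|<1$ by hypothesis. Then $az = c/b$, so the numerator parameter $c/b$ cancels against the denominator parameter $az$. The right-hand series collapses to
\[{}_1\phi_0\left[\begin{matrix} c/(ab)\\ {}\end{matrix};q,b\right] = \frac{(c/a;q)_\infty}{(b;q)_\infty},\]
by a third application of Theorem \ref{qBinomial}. Multiplying by the prefactor $(b,c/b;q)_\infty/(c,c/(ab);q)_\infty$ cancels $(b;q)_\infty$ and leaves exactly
\[\frac{(c/a;q)_\infty(c/b;q)_\infty}{(c;q)_\infty(c/(ab);q)_\infty}.\]

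\emph{Main obstacle (justifying the analysis).} The main obstacle is analytic rather than algebraic. We must justify the interchange of summation in Step 1, which uses $|b|<1$, and then remove this extra assumption. For the interchange, we bound the terms of the double sum by a convergent product of geometric-type series in $|b|$ and $|z|$.

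To remove $|b|<1$, we fix $a$, $c$, $q$ and regard both sides as functions of $b$ on the region $\{b : |c/(ab)|<1\}$. On this region the left side converges locally uniformly, since $(b;q)_n$ is bounded there and the terms decay like $|c/(ab)|^n$. The right side is analytic in $b$ away from its poles. The identity holds on the nonempty open subset where also $|b|<1$, for instance for $|b|$ slightly less than $1$ when $|c/a|<1$. In the general case we instead argue with $c/a$ fixed and vary $a$ and $b$ jointly. The identity then extends to the whole region by the identity theorem for analytic functions.
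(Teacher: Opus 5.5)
The paper does not prove this statement; it quotes it from Gasper--Rahman, (II.8). Your route is the classical proof of that formula: Heine's transformation obtained from two applications of the $q$-binomial theorem, then the choice $z=c/(ab)$, which makes $az=c/b$ and collapses the transformed series to a ${}_1\phi_0$. Steps~1 and~2 are correct, including the justification of the interchange. For $|z|,|b|<1$ the double sum is dominated by
\[\sum_{n\ge0}\frac{(-|a|;|q|)_n}{(|q|;|q|)_n}|z|^n\cdot\sum_{m\ge0}\frac{(-|c/b|;|q|)_m}{(|q|;|q|)_m}|b|^m<\infty .\]

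The gap is in the final analytic continuation. Continuing in $b$ with $a,c$ fixed is fine, but it only reaches the region $|b|<1$, where Step~1 is valid, when $|c/a|<1$. Your treatment of the remaining case does not work as stated. On any set where $t=c/a$ is held fixed with $|t|\ge1$, the hypothesis $|c/(ab)|=|t/b|<1$ forces $|b|>|t|\ge1$ at every point. So however you vary $a$ and $b$, you never meet a point where the identity is already known.

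The remedy is to fix the other ratio. Fix $a$ and $s=c/b$ with $|s|<|a|$, put $c=sb$, and regard both sides as functions of $b$. The argument $c/(ab)=s/a$ is then constant, so the left side
\[\sum_{n\ge0}\frac{(a,b;q)_n}{(q,sb;q)_n}\Bigl(\frac{s}{a}\Bigr)^{n}\]
converges locally uniformly on the connected set $\{b\in\mathbb{C}:(sb;q)_\infty\neq0\}$. The right side $(s,sb/a;q)_\infty/(sb,s/a;q)_\infty$ is holomorphic there, and the two agree for $0<|b|<1$ by Steps~1--2. Every admissible triple lies on such a slice (take $s=c/b$), so this handles all cases at once and your case split becomes unnecessary. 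The slices with $s=q^{-k}$, where the cancellation in Step~2 has the form $0/0$, follow by continuity in $s$.

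Alternatively, note that both sides are symmetric in $a$ and $b$. Steps~1--2 with $a$ and $b$ interchanged then give the identity for $|a|<1$, and your slice with $c/a$ and $b$ fixed, continued in $a$, does work.
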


\begin{theorem}[\cite{Gasper-Rahman}*{p. 359, Eq. (III. 8)}]\label{finite2phi1}
    \[{}_2\phi_1 \left[\begin{matrix} q^{-n}, b \\ c \end{matrix} ; q, z \right] = \frac{(c/b; q)_n}{(c; q)_n} b^n {}_3\phi_1 \left[ \begin{matrix} q^{-n}, b, q/z \\ bq^{1 - n}/c \end{matrix} ; q, \frac{z}{c} \right].\]
\end{theorem}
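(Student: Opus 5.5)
Theorem~\ref{finite2phi1} is quoted from \cite{Gasper-Rahman}, so one could simply cite it. To make the note self-contained, I would prove it as a polynomial identity in $z$, comparing coefficients and reducing each coefficient identity to the $q$-Gauss sum (Theorem~\ref{qGauss}) in its terminating form, i.e.\ the $q$-Chu--Vandermonde sum.

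First I would fix conventions. For ${}_3\phi_1$ the balancing exponent is $1+s-r=-1$; the displayed definition writes $1+r-s$, which I take to be a misprint. With this reading each term of the right-hand series is
\[\frac{(q^{-n},b,q/z;q)_j}{(q,bq^{1-n}/c;q)_j}\,(-1)^j q^{-\binom j2}\Big(\frac zc\Big)^j .\]
Since $(q/z;q)_j z^j=\prod_{i=1}^{j}(z-q^{i})$ is a polynomial of degree $j\le n$, both sides of the theorem are polynomials in $z$ of degree at most $n$. It therefore suffices to compare coefficients of $z^k$ for $0\le k\le n$. On the left this coefficient is simply $\frac{(q^{-n},b;q)_k}{(q,c;q)_k}$.

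On the right, I would expand $\prod_{i=1}^{j}(z-q^i)$ by the finite $q$-binomial theorem (Cauchy's form, the terminating case of Theorem~\ref{qBinomial}). The coefficient of $z^k$ is then an explicit $q$-binomial $\qb{j}{k}$ times a power of $q$ and a sign. Next I would shift the summation index $j=k+i$ and use the standard splittings
\[(a;q)_{k+i}=(a;q)_k(aq^k;q)_i,\qquad (q^{-n};q)_{k+i}=(q^{-n};q)_k(q^{-n+k};q)_i .\]
This should turn the inner sum over $i$ into a terminating ${}_2\phi_1$ with upper parameters $q^{-(n-k)}$ and $bq^k$, lower parameter $bq^{1-n+k}/c$, and argument equal to the $q$-Gauss argument. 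Applying Theorem~\ref{qGauss} with $a=q^{-(n-k)}$ evaluates it as a ratio of finite Pochhammer symbols, essentially $(q^{1-n}/c;q)_{n-k}/(bq^{1-n+k}/c;q)_{n-k}$. The last step is to invert these using $(xq^{-m};q)_m=(q/x;q)_m(-x)^mq^{-\binom{m+1}2}$. Together with the prefactor $\frac{(c/b;q)_n}{(c;q)_n}b^n$, the result should collapse to $\frac{(q^{-n},b;q)_k}{(q,c;q)_k}$, matching the left side.

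The main obstacle is this final bookkeeping: tracking the signs and the quadratic powers $q^{\binom{\cdot}{2}}$ coming from three sources, namely the ${}_3\phi_1$ normalization, the expansion of $(q/z;q)_j$, and the inversion of the reversed Pochhammer products. I would check the cases $n=1$ and $n=2$ by hand first to pin down the exponent convention. If the coefficient-wise route becomes unwieldy, the fallback is to reverse the order of summation in the left-hand ${}_2\phi_1$ ($k\mapsto n-k$) and then apply the $q$-Chu--Vandermonde sum to $(b;q)_{n-k}/(c;q)_{n-k}$. That is the route taken in \cite{Gasper-Rahman}, and it leads to the same identity.
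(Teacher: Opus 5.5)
Your proposal is correct and takes a different route from the paper, which gives no proof of Theorem~\ref{finite2phi1} at all: it quotes (III.8) from \cite{Gasper-Rahman} and uses it as a black box in Section~3.

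\textbf{Checking your computation.} I went through your coefficient comparison and the bookkeeping closes.
\begin{itemize}
\item The coefficient of $z^k$ in $(q/z;q)_j z^j$ is $\qb{j}{k}(-1)^{j-k}q^{\binom{j-k+1}{2}}$.
\item After setting $j=k+i$, the sign, the $c$-power and the $q$-power combine to $(-1)^k c^{-k}q^{-\binom{k}{2}}(q^{1-k}/c)^i$.
\item The inner sum is therefore ${}_2\phi_1(q^{-(n-k)},bq^k;bq^{1-n+k}/c;q,q^{1-k}/c)$. Its argument is exactly $C/(AB)$, and it evaluates to $(q^{1-n}/c;q)_{n-k}/(bq^{1-n+k}/c;q)_{n-k}$, as you predict.
\item The denominator merges as $(bq^{1-n}/c;q)_k(bq^{1-n+k}/c;q)_{n-k}=(bq^{1-n}/c;q)_n=(-bq/c)^n q^{-\binom{n+1}{2}}(c/b;q)_n$.
\item Also $(q^{1-n}/c;q)_{n-k}=(-1)^{n-k}c^{k-n}q^{\binom{k}{2}-\binom{n}{2}}(c;q)_n/(c;q)_k$.
\item With these, every sign, power of $c$ and quadratic power of $q$ cancels, leaving $(q^{-n},b;q)_k/(q,c;q)_k$.
\end{itemize}
So your fallback route is not needed.

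\textbf{Two small points.}
\begin{itemize}
\item Theorem~\ref{qGauss} as stated in the paper has the hypothesis $|c/(ab)|<1$, and your argument $q^{1-k}/c$ need not satisfy it. You should therefore use the terminating $q$-Chu--Vandermonde sum as an identity of rational functions in $b,c$, for generic parameters with nonvanishing denominators. This is what you indicate, and it is the right call.
\item Your reading of the normalization exponent as $1+s-r$ is correct. It also matches how the paper actually uses ${}_3\phi_1$ in Section~3, where the factor $(-1)^Mq^{-\binom{M}{2}}$ appears.
\end{itemize}

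\textbf{Comparison.} The paper's citation is economical. Your argument costs some bookkeeping but makes the note self-contained, using only tools already listed in Section~2: the finite $q$-binomial theorem and the terminating $q$-Gauss sum. Being a coefficient check, it verifies the identity rather than explaining where the right-hand side comes from.
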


\begin{proposition}[\cite{Gasper-Rahman}*{p. 351, Eq. (I.3)}]\label{pochhammerFactorization}
    \[(a; b)_n = (-1)^n a^n b^{\binom{n}{2}} (a^{-1}, b^{-1})_n.\]
\end{proposition}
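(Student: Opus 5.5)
The plan is to prove the identity factor by factor, reading $(a^{-1}, b^{-1})_n$ as the Pochhammer symbol $(a^{-1}; b^{-1})_n$ with base $b^{-1}$. We assume $a \neq 0$ and $b \neq 0$ so that the right-hand side makes sense. Starting from the definition $(a;b)_n = \prod_{k=0}^{n-1}(1 - ab^k)$, we rewrite each factor by pulling out $-ab^k$:
\[1 - ab^k = -ab^k\left(1 - a^{-1}b^{-k}\right) = -ab^k\left(1 - a^{-1}(b^{-1})^k\right).\]
This is a direct check: expanding the right side gives $-ab^k + 1$.

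Next we take the product over $k = 0, 1, \ldots, n-1$ and collect the prefactors. The $n$ copies of $-1$ give $(-1)^n$, and the $n$ copies of $a$ give $a^n$. The powers of $b$ give $b^{\sum_{k=0}^{n-1} k} = b^{\binom{n}{2}}$. What remains is
\[\prod_{k=0}^{n-1}\left(1 - a^{-1}(b^{-1})^k\right),\]
which is exactly $(a^{-1}; b^{-1})_n$ by the definition of the $q$-Pochhammer symbol with $b^{-1}$ in place of $b$.

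Combining the prefactors with this product gives $(a;b)_n = (-1)^n a^n b^{\binom{n}{2}} (a^{-1}; b^{-1})_n$, which is the stated identity. The case $n = 0$ holds trivially, since both sides equal $1$. No step presents a real obstacle; the only care needed is the bookkeeping of the exponent $\sum_{k=0}^{n-1} k = \binom{n}{2}$ and the convention that the second argument in $(a^{-1}, b^{-1})_n$ is the base.
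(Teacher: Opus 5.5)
Your proof is correct and complete. The paper gives no proof of its own and simply cites Gasper--Rahman (I.3); your factor-by-factor rewriting $1-ab^k=-ab^k\bigl(1-a^{-1}(b^{-1})^k\bigr)$, followed by collecting the prefactor $(-1)^n a^n b^{\binom{n}{2}}$, is the standard verification of that identity. You were also right to read the comma in $(a^{-1}, b^{-1})_n$ as a semicolon with base $b^{-1}$, which is how the paper actually uses it (with $a=q^N$, $b=q^{-1}$, producing $(q^{-N};q)_M$).
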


\section{Proof}

We begin with the following two lemmas.

\begin{lemma}\label{lemma1}
    \[\sum_{N = 0}^\infty \frac{q^{2N+1}}{(q^{-N};q)_N} {}_2\phi_1\left[ \begin{matrix} q^{-N}, q^{N+1} \\ q \end{matrix} ; q, 1 \right] = \frac{q}{(q^2; q)_\infty}.\]
\end{lemma}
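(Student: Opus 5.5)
The plan is to evaluate the terminating ${}_2\phi_1$ in closed or near-closed form using Theorem \ref{finite2phi1}, so that the awkward factor $1/(q^{-N};q)_N$ cancels. Then I would interchange summations and finish with the $q$-binomial theorem (Theorem \ref{qBinomial}) or the $q$-Gauss sum (Theorem \ref{qGauss}).

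\emph{Step 1: the $N=0$ term.} When $N=0$ the empty products equal $1$ and the ${}_2\phi_1$ reduces to $1$, so this term contributes exactly $q$. This matches the leading term of $q/(q^2;q)_\infty$.

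\emph{Step 2: transform the ${}_2\phi_1$ for $N\ge 1$.} Apply Theorem \ref{finite2phi1} with $n=N$, $b=q^{N+1}$, $c=q$ and $z=1$. The prefactor is
\[\frac{(c/b;q)_N}{(c;q)_N}\,b^N=\frac{(q^{-N};q)_N}{(q;q)_N}\,q^{N(N+1)}.\]
Its numerator $(q^{-N};q)_N$ cancels the denominator in the summand of Lemma \ref{lemma1}. In the resulting ${}_3\phi_1$ the upper parameter $q/z=q$ cancels the $(q;q)_n$ in the denominator. The lower parameter is $bq^{1-N}/c=q$, and the argument is $z/c=q^{-1}$. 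Hence the $N$-th summand becomes
\[\frac{q^{N^2+3N+1}}{(q;q)_N}\sum_{n=0}^{N}\frac{(q^{-N};q)_n\,(q^{N+1};q)_n}{(q;q)_n}\,(-1)^n q^{-\binom n2}q^{-n}.\]
Next I rewrite $(q^{-N};q)_n=(-1)^nq^{\binom n2-Nn}(q;q)_N/(q;q)_{N-n}$, which follows from Proposition \ref{pochhammerFactorization}. This cancels the sign and the $q^{-\binom n2}$, and the $(q;q)_N$ cancels as well. The summand becomes
\[\sum_{n=0}^N \frac{(q^{N+1};q)_n}{(q;q)_n(q;q)_{N-n}}\,q^{N^2+3N+1-(N+1)n}.\]
Using $(q^{N+1};q)_n=(q;q)_{N+n}/(q;q)_N$, this is a sum of $q$-binomial-type terms $\qb{N+n}{n}/(q;q)_{N-n}$ times a power of $q$.

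\emph{Step 3: interchange and sum.} I would write $N=n+k$ with $k\ge 0$ and swap the order of summation. For fixed $n$, the sum over $k$ involves $(q;q)_{2n+k}/\bigl((q;q)_n(q;q)_{n+k}(q;q)_k\bigr)$ with a quadratic exponent in $k$. This should be recognizable as a ${}_2\phi_1$ or ${}_1\phi_0$ evaluable by Theorem \ref{qGauss} or Theorem \ref{qBinomial}, most plausibly with parameters $a=q^{2n+1}$ and $c=q^{n+1}$. That evaluation should yield products like $1/(q^{n+1};q)_\infty$. The remaining sum over $n$ should then collapse via Theorem \ref{qBinomial} (Euler's identities) to $q(1-q)/(q;q)_\infty=q/(q^2;q)_\infty$. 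If the exponents do not line up, I would instead use the $q$-Pascal identity (Theorem \ref{qPascal}) on $\qb{N+n}{n}$ to split the sum into two telescoping pieces.

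\emph{Main obstacle.} I expect Step 3 to be the hardest part. The quadratic exponent $N^2+3N+1-(N+1)n$, after the substitution $N=n+k$, must be matched exactly to a summable basic hypergeometric series. Bookkeeping errors in the powers of $q$ are the likely failure point, and the right choice of which index to sum first may need trial. Before committing to the final interchange, I would check the resulting expression against the first few coefficients of $q/(q^2;q)_\infty$.
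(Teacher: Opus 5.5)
\textbf{There is a genuine gap: Step 3 cannot be completed as you describe it.}

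Your Step 2 is computed correctly, but it goes in the wrong direction. Since $(q^{N+1};q)_n/(q;q)_n=\qb{N+n}{n}$ and $N^2+3N+1-(N+1)n=(N-n)(N+1)+2N+1$, your double sum is exactly $\sum_{N\ge0}\sum_{M=0}^{N}\qb{N+M}{M}q^{(N-M)(N+1)+2N+1}/(q;q)_{N-M}$. That is the generating function of Theorem \ref{mainTheorem}, which the paper reduces \emph{to} this lemma. So all of the content is pushed into Step 3.

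The mechanism you propose for Step 3 fails. Writing $N=n+k$, the fixed-$n$ sum is
\[\frac{q^{2n+1}(q;q)_{2n}}{(q;q)_n^2}\sum_{k\ge0}\frac{(q^{2n+1};q)_k}{(q^{n+1};q)_k\,(q;q)_k}\,q^{k^2+(n+3)k}.\]
The factor $q^{k^2}$ rules out a ${}_2\phi_1$ or a ${}_1\phi_0$, since these carry no quadratic exponent. So neither Theorem \ref{qGauss} nor Theorem \ref{qBinomial} applies. Already for $n=0$ the sum is
\[q\sum_k \frac{q^{k^2+3k}}{(q;q)_k}=q^{-2}\bigl((1+q)H(q)-G(q)\bigr),\]
where $G(q)=\sum_k q^{k^2}/(q;q)_k$ and $H(q)=\sum_k q^{k^2+k}/(q;q)_k$ are the Rogers--Ramanujan functions. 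So no termwise evaluation producing factors like $1/(q^{n+1};q)_\infty$ is available. The $q$-Pascal fallback is too vague to count as an argument.

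\textbf{The missing idea} is that the ${}_2\phi_1$ is summable as it stands. With $a=q^{-N}$, $b=q^{N+1}$, $c=q$ one has $c/(ab)=1$, which is exactly the argument. It is therefore the terminating $q$-Gauss ($q$-Chu--Vandermonde) sum, and it equals $(c/b;q)_N/(c;q)_N=(q^{-N};q)_N/(q;q)_N$. This cancels the prefactor and leaves $\sum_{N\ge0}q^{2N+1}/(q;q)_N$. Theorem \ref{qBinomial} with $a=0$, $z=q^2$ then gives $q/(q^2;q)_\infty$.

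This is the paper's proof. Theorem \ref{qGauss} is stated for $|c/(ab)|<1$, and its product side is $0/0$ at $c=q$. The paper therefore makes three replacements: $c=xq$ for $c=q$, argument $x$ for argument $1$, and $(xq^{-N};q)_N$ for $(q^{-N};q)_N$. It then uses $(xq^{-N};q)_\infty=(xq^{-N};q)_N(x;q)_\infty$ to reduce the summand to $q^{2N+1}/(xq;q)_N$, and finally lets $x\to1$.
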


\begin{proof}
Consider the following slightly modified series
\[\sum_{N=0}^\infty \frac{q^{2N+1}}{(xq^{-N};q)_N} {}_2 \phi_1 \left[
\begin{matrix} q^{-N}, q^{N+1} \\ xq \end{matrix} ; q , x \right].\]
We apply the $q$-Gauss identity (Theorem \ref{qGauss}) with $a = q^{-N}, b = q^{N + 1},$ and $c = xq$ to make this
\[\sum_{N = 0}^\infty \frac{q^{2N + 1} (xq^{N + 1}; q)_\infty
(xq^{-N}; q)_\infty}{(xq^{-N}; q)_N (xq; q)_\infty (x; q)_\infty} .\]
Since 
\[\frac{(xq^{-N}; q)_\infty}{(xq^{-N}; q)_N (x; q)_\infty} = 1\]
this now simplifies to 
\[\sum_{N=0}^\infty \frac{q^{2N+1} (xq^{N+1};q)_\infty}{(xq;q)_\infty} = q\sum_{N=0}^\infty \frac{q^{2N}}{(xq;q)_N}.\]
We can take the limit $x \to 1$ avoiding $x = q^j$ for any $j$ and then apply the $q$-binomial theorem (Theorem \ref{qBinomial}) with $a = 0$ and $z = q^2$ to prove the lemma.
\end{proof}

\begin{lemma}\label{lemma2}
    \[1 + \sum_{N = 0}^\infty \frac{q^{(N + 1)(N + 2)}}{(q; q)_{N + 1}}{}_3\phi_1 \left[\begin{matrix} q^{-(N + 1)}, q^{N+1}, q \\ q \end{matrix} ; q, 1 \right] = \frac{1}{(q; q)_\infty}.\]
\end{lemma}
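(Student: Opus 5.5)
The plan is to mirror the proof of Lemma \ref{lemma1}: first evaluate the inner terminating ${}_3\phi_1$ in closed form for each fixed $N$, then recognize the resulting outer sum as a known expansion of $1/(q;q)_\infty$. Write $n = N+1$, so the inner series is ${}_3\phi_1[q^{-n}, q^{n}, q; q; q, 1]$. This has exactly the shape of the right-hand side of Theorem \ref{finite2phi1}. Take $b = q^{n}$, so that the third numerator parameter $q/z = q$ forces $z = 1$, and the denominator condition $bq^{1-n}/c = q$ forces $c = q$. Reading Theorem \ref{finite2phi1} from right to left then expresses the ${}_3\phi_1$ as
\[
\frac{(c;q)_n}{(c/b;q)_n\, b^n}\, {}_2\phi_1\left[\begin{matrix} q^{-n}, q^{n} \\ c \end{matrix}; q, z\right].
\]
Here the argument of the ${}_3\phi_1$ is $z/c$. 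Matching this to the argument printed in the lemma has to be checked against the convention for the factor $\left((-1)^k q^{\binom{k}{2}}\right)^{1+r-s}$ in the definition of ${}_r\phi_s$. The prefactor also degenerates at $c=q$, since $c/b = q^{1-n}$.

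Because of these degeneracies ($c/b = q^{1-n}$, and $(q^{1-n};q)_n = 0$), I will insert a deformation parameter $x$, exactly as in Lemma \ref{lemma1}: replace $c = q$ by $c = xq$ and adjust $z$ to keep the identity valid. After that I evaluate the resulting ${}_2\phi_1[q^{-n}, q^{n}; xq; q, \cdot]$ by the $q$-Gauss sum (Theorem \ref{qGauss}), choosing $z = c/(ab) = xq$, or equivalently by its terminating $q$-Chu--Vandermonde form. The infinite products collapse to finite Pochhammer symbols via Proposition \ref{pochhammerFactorization}, in the same way as the identity $(xq^{-N};q)_\infty = (xq^{-N};q)_N (x;q)_\infty$ was used before. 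Letting $x \to 1$ while avoiding $x = q^j$ should give a closed form $T_n$ for the inner series, which I expect to be a simple ratio of $q$-Pochhammer symbols, possibly times a power of $q$.

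The final step is to substitute this closed form and identify
\[
1 + \sum_{n \ge 1} \frac{q^{n(n+1)}}{(q;q)_n}\, T_n
\]
with a Durfee-type decomposition of $1/(q;q)_\infty$. The natural candidates are the Durfee square identity $\sum_n q^{n^2}/(q;q)_n^2$ and the Durfee rectangle identity $\sum_n q^{n(n+1)}/\bigl((q;q)_n(q;q)_{n+1}\bigr)$. Both are consequences of the $q$-binomial theorem (Theorem \ref{qBinomial}) applied after expanding a product. If neither matches directly, I would reorganize using $q$-Pascal (Theorem \ref{qPascal}) to split $T_n$ into two pieces, each of which telescopes or matches one of these expansions.

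I expect the main obstacle to be the first step. Getting the parameters, argument and normalization of Theorem \ref{finite2phi1} to line up exactly with the ${}_3\phi_1$ as defined in the paper is delicate, and the deformation has to be chosen so that the prefactor $(c;q)_n/(c/b;q)_n$ and the Gauss evaluation have a finite, nonzero limit as $x \to 1$. If this route becomes unwieldy, my fallback is to sum the terminating ${}_3\phi_1$ directly. For $z = 1$, the parameters $q$ in numerator and denominator cancel, which reduces it to a ${}_2\phi_0$-type sum $\sum_k (q^{-n}, q^n;q)_k/(q;q)_k \cdot(\text{sign/power})$. I would then evaluate that sum via the $q$-binomial theorem on $(q^{-n};q)_k/(q;q)_k$.
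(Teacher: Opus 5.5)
Your strategy is the paper's. You read Theorem~\ref{finite2phi1} from right to left with $b=q^{n}$, insert a parameter so the prefactor is nondegenerate, evaluate the ${}_2\phi_1$ by $q$-Gauss, and finish with Theorem~\ref{qBinomial}. It goes through, but two points in your write-up are off.

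First, with $b=q^{n}$ the denominator parameter is $bq^{1-n}/c=q/c$. Matching it to $q$ forces $c=1$, not $c=q$. Then the argument is $z/c=1$, and there is no mismatch to explain by conventions. The normalization to use is Gasper--Rahman's $[(-1)^kq^{\binom k2}]^{1+s-r}$, which gives the factor $(-1)^kq^{-\binom k2}$ for a ${}_3\phi_1$. That is exactly the inner sum in the main proof; the exponent $1+r-s$ in the paper's displayed definition is a misprint. The real degeneracy is $(c;q)_k=0$ at $c=1$, which is why the paper takes $c=z=x$ and lets $x\to1$. Your choice $c=z=xq$ also works. It yields ${}_3\phi_1[q^{-n},q^{n},1/x;\,1/x;\,q,1]$, and since the paired parameters cancel termwise, this equals the target series for every admissible $x$. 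So no limit is actually needed, and letting $x\to1$ (where your prefactor is singular) is harmless.

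Second, the closed form is simpler than you expect. The terminating $q$-Gauss (Chu--Vandermonde) sum gives ${}_2\phi_1[q^{-n},q^{n};\,c;\,q,c]=(cq^{-n};q)_n/(c;q)_n$. This cancels the prefactor $(c;q)_n/(cq^{-n};q)_n$ exactly, so $T_n=q^{-n^2}$. The summand becomes $q^{n(n+1)-n^2}/(q;q)_n=q^n/(q;q)_n$. The last step is then Euler's identity $\sum_{n\ge0}q^n/(q;q)_n=1/(q;q)_\infty$, which is Theorem~\ref{qBinomial} with $a=0$, $z=q$. Neither Durfee expansion matches termwise, so the Durfee/$q$-Pascal step you anticipate is a detour.
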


\begin{proof}
Let $0 \neq |x| < 1$ and consider the following slightly modified series
\[1 + \sum_{N = 0}^\infty \frac{q^{(N + 1)(N + 2)}}{(q; q)_{N + 1}}{}_3\phi_1 \left[\begin{matrix} q^{-(N + 1)}, q^{N+1}, q/x \\ q/x \end{matrix} ; q, 1/x \right].\]
Using \ref{finite2phi1} with $n = N + 1, b = q^{N + 1}, c = x,$ and $z = x$, we obtain
\[1 + \sum_{N = 0}^\infty \frac{q^{(N + 1)(N + 2)}}{(q; q)_{N + 1}}
\frac{(x; q)_{N + 1}}{(x/q^{N + 1}; q)_{N + 1}}\left(\frac{1}{q^{N + 1}}\right)^{N + 1} {}_2\phi_1 \left[ \begin{matrix} q^{-(N + 1)}, q^{N + 1} \\ x \end{matrix} ; q, x \right].\]
Now applying \ref{qGauss} with $a = q^{-(N + 1)}, b = q^{N + 1},$ and $c = x$, this transforms into
\[1 + \sum_{N = 0}^\infty \frac{q^{(N + 1)(N + 2)}}{(q; q)_{N + 1}}
\frac{(x; q)_{N + 1}}{(x/q^{N + 1}; q)_{N + 1}}\left(\frac{1}{q^{N + 1}}\right)^{N + 1} \frac{(x/q^{-(N + 1)}; q)_\infty (x/q^{N + 1}; q)_\infty}{(x; q)_\infty (x/(q^{-(N + 1)} \cdot q^{N + 1}); q)_\infty}.\]
Now since
\[\frac{(x; q)_{N + 1}(xq^{N + 1}; q)_\infty}{(x; q)_\infty} \cdot \frac{(x/q^{N + 1}; q)_\infty}{(x/q^{N + 1}; q)_{N + 1}(x; q)_\infty} = 1,\]
the series simplifies to
\[1 + \sum_{N = 0}^\infty \frac{q^{N + 1}}{(q; q)_{N + 1}} = \sum_{m = 0}^\infty \frac{q^m}{(q; q)_m} = \frac{1}{(q; q)_\infty}.\]
Where we reindexed the sum and then applied Theorem \ref{qBinomial} with $a = 0$ and $z = q$. Now taking $x \to 1$ in the original modified series proves the lemma.
\end{proof}

We can now prove Theorem \ref{mainTheorem}.

\begin{proof}[Proof of Theorem \ref{mainTheorem}]
We start with the generating function
\[q + \sum_{n = 2}^\infty \sum_{m = 0}^{n} \qb{2n-m-2}{n-m-1} \frac{q^{mn + 2n - 1}}{(q; q)_m}.\]
which, after setting $N = n - 1$ and $M = N - m$, equals 
\[\sum_{N = 0}^\infty \sum_{M = 0}^{N} \qb{N + M}{M} \frac{q^{(N - M)(N + 1) + 2N + 1}}{(q; q)_{N - M}}.\]
Since $(q; q)_{N + M} = (q; q)_{N}(q^{N + 1}; q)_{M}$ and $(q; q)_{N - M} = (q; q)_N/(q^{N}; q^{-1})_{M}$, this becomes
\[\sum_{N = 0}^\infty \frac{q^{N^2 + 3N + 1}}{(q; q)_N} \sum_{M = 0}^{N} \frac{q^{- M(N + 1)}(q^{N + 1}; q)_{M}(q^N; q^{-1})_{M}}{(q; q)_{M}}.\]
Now we use Proposition \ref{pochhammerFactorization} with $a = q^N, b = q^{-1},$ and $n = M$ and simultaneously multiply by $(q; q)_M/(q; q)_M$ to obtain
\[\sum_{N = 0}^\infty \frac{q^{N^2 + 3N + 1}}{(q; q)_N} \sum_{M = 0}^{N} \frac{(-1)^M (q^{-1})^M (q^{-1})^{\binom{M}{2}}(q^{N + 1}; q)_{M}(q^{-N}; q)_{M}(q; q)_{M}}{(q; q)_{M}(q; q)_{M}}.\]
Finally, using Theorem \ref{finite2phi1} with $n = N, b = q^{N + 1},$ and $c = z = q$ we obtain
\[\sum_{N=0}^\infty \frac{q^{N^2+3N+1} (q; q)_N}{(q; q)_N(q^{-N};q)_N}
q^{-N^2 - N} {}_2\phi_1 \left[ \begin{matrix} q^{-N}, q^{N+1} \\ q
\end{matrix} ; q, 1 \right].\]
After canceling the power of $q$ and applying Lemma \ref{lemma1} we get the first half of the theorem. We will now focus on the sum of the two generating functions given in Theorem \ref{mainTheorem}.
\begin{flalign*}
    1 &+ q + \frac{q^2}{1 - q} + \sum_{n = 2}^\infty \sum_{m = 0}^{n}\left(\qb{2n - m - 2}{n - m - 1} \frac{q^{mn + 2n - 1}}{(q; q)_m} + \qb{2n - m - 2}{n - m} \frac{q^{mn + n}}{(q; q)_m}\right) \\
    &= 1 + q + \frac{q^2}{1 - q} + \sum_{n = 2}^\infty \sum_{m = 0}^{n}\frac{q^{mn + n}}{(q; q)_m} \left(q^{n - 1}\qb{2n - m - 2}{n - m - 1} + \qb{2n - m - 2}{n - m}\right).
\end{flalign*}
Where we can use the second equality in Theorem \ref{qPascal} with $a = 2n - m - 1$ and $b = n - m$ to obtain
\[1 + q + \frac{q^2}{1 - q} + \sum_{n = 2}^\infty \sum_{m = 0}^{n}\frac{q^{mn + n}}{(q; q)_m} \qb{2n - m - 1}{n - m}\]
\[= 1 + \sum_{n = 1}^\infty \sum_{m = 0}^{n}\frac{q^{mn + n}}{(q; q)_m} \qb{2n - m - 1}{n - m}.\]
After setting $N = n - 1$ and $M = N + 1 - m$, equals 
\[1 + \sum_{N = 0}^\infty \sum_{M = 0}^{N + 1} \frac{q^{(N + 1 - M)(N + 1) + N + 1}}{(q; q)_{N + 1 - M}} \qb{N + M}{M}\]
\[= 1 + \sum_{N = 0}^\infty \frac{q^{(N + 1)(N + 2)}}{(q; q)_N} \sum_{M = 0}^{N + 1} \frac{q^{-M(N + 1)} (q;q)_{N + M}}{(q;q)_{N + 1 - M}(q;q)_{M}}.\]
Since $(q; q)_{N + M} = (q; q)_{N}(q^{N + 1}; q)_{M}$ and $(q; q)_{N + 1 - M} = (q; q)_{N + 1}/(q^{N + 1}; q^{-1})_{M}$, this becomes
\[1 + \sum_{N = 0}^\infty \frac{q^{(N + 1)(N + 2)}}{(q; q)_N (1 - q^{N + 1})} \sum_{M = 0}^{N + 1} \frac{q^{-M(N + 1)} (q^{N+1};q)_{M} (q^{N+1}; q^{-1})_M}{(q; q)_{M}}.\]
Again we use Proposition \ref{pochhammerFactorization} with $a = q^{N + 1}, b = q^{-1},$ and $n = M$ and simultaneously multiply by $(q; q)_M/(q; q)_M$ to obtain
\[1 + \sum_{N = 0}^\infty \frac{q^{(N + 1)(N + 2)}}{(q; q)_{N + 1}} \sum_{M = 0}^{N + 1} \frac{(q^{-(N + 1)}; q)_M (q^{N + 1};q)_{M} (-1)^M q^{-\binom{M}{2}} (q;q)_{M}}{(q;q)_{M}(q;q)_{M}}.\]
Recognizing the inner sum as a ${}_3\phi_1$ and applying Lemma \ref{lemma2} now proves the theorem.
\end{proof}

\section{Proof of Beck's conjecture/Remark 10}

In \cite{Beck-OEIS} Beck conjectured that $e(n)$, defined in the introduction, is equal to the sum of the number of part sizes in all partitions of $2n + 2$ with rank $n + 1$, where the \textit{rank} of a partition is defined to be the largest part minus the number of parts. We give a combinatorial proof of this fact without relying on generating functions. The following two lemmas are not new, but we include proofs of both for completeness.

\begin{lemma}
    The total number of partitions of $n$ that do not contain a part of size 1 is equal to $p(n) - p(n - 1)$.
\end{lemma}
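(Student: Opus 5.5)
The plan is a direct bijection. We interpret $p(n-1)$ as counting the partitions of $n$ that \emph{do} contain a part of size $1$, and then subtract this from $p(n)$. Concretely, let $\mathcal{P}(n)$ denote the set of partitions of $n$, and let $\mathcal{P}_1(n) \subseteq \mathcal{P}(n)$ be the subset of partitions containing at least one part equal to $1$. The complement $\mathcal{P}(n) \setminus \mathcal{P}_1(n)$ is exactly the set of partitions of $n$ with no part of size $1$. So it suffices to show that $|\mathcal{P}_1(n)| = p(n-1)$.

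For this, define $\phi \colon \mathcal{P}(n-1) \to \mathcal{P}_1(n)$ by appending a single part of size $1$ to a partition $\mu$ of $n-1$. Since $1$ is the smallest possible part, appending it keeps the sequence weakly decreasing, and the size increases by exactly one. The inverse map $\psi \colon \mathcal{P}_1(n) \to \mathcal{P}(n-1)$ removes one copy of the part $1$, which is well defined precisely because the partition contains a $1$. One checks immediately that $\phi \circ \psi$ and $\psi \circ \phi$ are the identity maps, so $\phi$ is a bijection.

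Hence the number of partitions of $n$ without a part of size $1$ is
\[
p(n) - |\mathcal{P}_1(n)| = p(n) - p(n-1).
\]

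The only points requiring care are the boundary conventions. For $n = 0$ we use $p(-1) = 0$: the empty partition has no part $1$, and the count is $p(0) = 1$, which is consistent. For $n = 1$ the count is $p(1) - p(0) = 0$, which is also correct. There is no real obstacle here beyond stating these conventions explicitly.

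Alternatively, the identity follows at once from the generating function
\[
(1-q)\frac{1}{(q;q)_\infty} = \frac{1}{(q^2;q)_\infty}.
\]
However, since the section promises combinatorial arguments, I would present the bijection.
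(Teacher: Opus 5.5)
Your proposal is correct and matches the paper's proof: both build the bijection between partitions of $n-1$ and partitions of $n$ containing a $1$ by appending or removing a single part of size $1$, then subtract this count from $p(n)$. Your boundary conventions for $n=0,1$ and the generating-function check $(1-q)/(q;q)_\infty = 1/(q^2;q)_\infty$ are also correct; the paper does not spell these out.
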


\begin{proof}
    We create a bijection between the set of partitions of $n$ that do contain a part of size 1 and the set of partitions of $n - 1$. Let $\lambda$ be a partition of $n$ containing a part of size 1. We remove this part of size 1 to obtain a partition of $n - 1$. Additionally, take a partition of $n - 1$ and append a part of size 1 to obtain a partition of $n$ with at least one part of size 1. These two actions form a bijection and thus there are $p(n - 1)$ partitions of $n$ that contain a part of size $1$. The difference $p(n) - p(n - 1)$ is thus the number of partitions of $n$ that do not contain any parts of size 1.
\end{proof}

\begin{lemma}
    The total number of part sizes across all partitions of $n$ that do not contain parts of size 1 is equal to $p(n - 2)$.
\end{lemma}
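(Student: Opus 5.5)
We count pairs $(\lambda, k)$, where $\lambda$ is a partition of $n$ with no part equal to $1$ and $k$ is a part size occurring in $\lambda$. The total number of such pairs is exactly the quantity in the statement. For a fixed $k \ge 2$, we remove one copy of $k$ from $\lambda$. This is a bijection between partitions of $n$ with no $1$'s that contain $k$ and partitions of $n-k$ with no $1$'s. Its inverse inserts a part $k$, which is never $1$. By the previous lemma, the number of such pairs for fixed $k$ is therefore $p(n-k) - p(n-k-1)$. This holds for all $k \le n$ with the convention $p(0)=1$ and $p(j)=0$ for $j<0$. The case $k=n$ is the single partition $(n)$, and it gives $p(0)-p(-1)=1$, which agrees.

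Summing over $k = 2, \dots, n$, the total is
\[\sum_{k=2}^{n} \bigl(p(n-k) - p(n-k-1)\bigr) = p(n-2) - p(-1) = p(n-2).\]
The sum telescopes, which gives the claim.

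The only step that needs care is that the previous lemma applies at every $m = n-k$ in the range, including the small cases $m=0$ and $m=1$. For $m=1$ there is no partition without a $1$, and $p(1)-p(0)=0$ agrees. For $m=0$ the empty partition counts, and $p(0)-p(-1)=1$ agrees. Both boundary cases are consistent with the conventions above.

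Alternatively, one could write a direct bijective version. Send the pair $(\lambda, k)$ to the partition obtained by deleting one copy of $k$ and appending $k-2$ ones, which is a partition of $n-2$. The inverse reads off $k$ as two more than the number of $1$'s, strips those $1$'s, and inserts $k$. The telescoping argument above is simpler, so I would present that one.
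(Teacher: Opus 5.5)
Your telescoping argument is correct, including the boundary checks at $m=n-k\in\{0,1\}$. The ``alternative'' bijection you sketch at the end is exactly the paper's proof: delete one copy of $k$ and append $k-2$ ones, with the inverse recovering $k$ as $m_1(\mu)+2$.

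The two routes are the same argument seen from different sides. The identity $\sum_{j=0}^{n-2}\bigl(p(j)-p(j-1)\bigr)=p(n-2)$ is the count version of sorting partitions $\mu \vdash n-2$ by their number of $1$'s. Those with exactly $k-2$ ones correspond to $1$-free partitions of $n-k$. Composing your fixed-$k$ bijection with ``append $k-2$ ones'' gives precisely the paper's map.

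Your version buys a finer per-$k$ statement: for each $k\ge 2$, the number of $1$-free partitions of $n$ containing a part $k$ is $p(n-k)-p(n-k-1)$. It costs a dependence on the previous lemma, the conventions $p(0)=1$ and $p(j)=0$ for $j<0$, and a cancellation step instead of an explicit correspondence.

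The paper's single bijection is self-contained and needs no separate boundary checks. It also gives a direct one-to-one correspondence, which suits that section's goal of a generating-function-free combinatorial proof. So it is arguably the better one to present there, though either proof is valid.
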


\begin{proof}
    We will create a bijection between the set of pairs $(\lambda, k)$ where $\lambda$ is a partition that does not contain any parts of size 1 and $k \geq 2$ is a part size that appears in $\lambda$ and the set of partitions $\mu$ of $n - 2$. For a pair $(\lambda, k)$ on the left form $\mu$ by removing a part of size $k$ from $\lambda$ and simultaneously appending $k - 2 \geq 0$ parts of size 1. Notice that $|\mu| = (n - k) + (k - 2) = n - 2$ so $\mu$ is a partition of size $n - 2$. In the other direction, take some partition $\mu$ of $n - 2$ and let $m_1$ be the number of parts of size 1 in $\mu$. We remove all $m_1$ of these parts and append a part of size $k = m_1 + 2 \geq 2$ to form $\lambda$, a partition of $(n - 2 - m_1) + (m_1 + 2) = n$ that contains no parts of size 1 with $k$ as a part size. Both of these maps are injections and form a bijection between the two sets proving the lemma.
\end{proof}

\begin{theorem}
    Let $f(n)$ be the sum of the number of part sizes in all partitions of $2n + 2$ with rank $n + 1$. Then,
    \[e(n) = p(n) - p(n - 1) + p(n - 2) = f(n).\]
\end{theorem}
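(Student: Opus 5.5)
The first equality comes from the generating function and the second from a chain of bijections that ends at the two preceding lemmas.

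\emph{First equality.} By Theorem \ref{mainTheorem}, $\sum e(n)q^n = (1-q+q^2)/(q;q)_\infty$. Comparing coefficients of $q^n$ gives $e(n) = p(n) - p(n-1) + p(n-2)$, with the convention $p(m)=0$ for $m<0$.

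\emph{Second equality.} The plan is to rewrite $f(n)$ as a weighted count of partitions of $n$ with no part equal to $1$. Then the preceding two lemmas give exactly $p(n)-p(n-1)$ and $p(n-2)$.

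\emph{Step 1: remove the largest part.} Let $\lambda \vdash 2n+2$ have rank $n+1$, with largest part $L$ and $k$ parts, so $L = n+1+k$. Deleting one copy of $L$ leaves a partition $\mu$ of $n+1-k$ with exactly $k-1$ parts. Conversely, any such $\mu$ has all parts at most $n+1-k < L$, so adding $L=n+1+k$ back recovers $\lambda$. This is a bijection, and it also shows $L$ occurs exactly once in $\lambda$. Hence the number of part sizes of $\lambda$ is $1$ plus the number of part sizes of $\mu$.

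\emph{Step 2: conjugate and append.} Conjugate $\mu$ to get $\mu'$, a partition of $n+1-k$ with largest part exactly $k-1$. Append one more part $k-1$ to obtain $\nu \vdash n$ whose largest part occurs at least twice. This map is reversible (delete one copy of the largest part, then conjugate). Conjugation preserves the number of distinct part sizes, since both count the corners of the Ferrers diagram. Appending a copy of an existing part also preserves it.

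\emph{Step 3: conjugate again and apply the lemmas.} Conjugating $\nu$ gives $\rho \vdash n$. The largest part of $\nu$ repeating is equivalent to $\rho$ having no part equal to $1$, because the multiplicity of $1$ in $\rho$ is $\nu_1-\nu_2$. Combining the three steps,
\[ f(n) = \sum_{\substack{\rho \vdash n\\ 1\notin\rho}} \bigl(1 + \#\{\text{part sizes of }\rho\}\bigr). \]
The sum of the $1$'s is $p(n)-p(n-1)$ by the first lemma. The sum of the part-size counts is $p(n-2)$ by the second lemma.

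\emph{Main obstacle.} The delicate part is the bookkeeping in Steps 1–2: the new part $L$ must be strictly larger than every part of $\mu$, and the parameter $k$ must range over exactly the values for which these bijections are valid. I would check the degenerate cases separately: $k=1$ (possible only when $n=0$) and $n=0,1$, where $p$ of negative arguments appear. Summing the bijection over all valid $k$ then completes the proof.
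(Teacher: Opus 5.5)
Your proof is correct and is essentially the paper's argument. Your Steps 2 and 3 (conjugate $\mu$, append a part $k-1$, conjugate back) amount to adding $1$ to each of the $k-1$ parts of $\mu$, which is exactly the paper's direct map $\lambda \mapsto \lambda^*$; both proofs then finish with the same two lemmas, so the double conjugation is an unnecessary detour, though your explicit check of the inverse map and of the degenerate case $n=0$ is a bit more careful than the paper.
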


\begin{proof}
    Notice that the first equality is an immediate consequence of Theorem \ref{mainTheorem}. Now let $\lambda$ be a partition of $2n + 2$ with rank $n + 1$. By definition we have $\lambda_1 = n + 1 + \ell(\lambda)$. Let $\lambda^*$ be the partition obtained after removing the largest part, $\lambda_1$, and adding 1 to each of the $\ell(\lambda) - 1$ remaining parts. Notice $\lambda^*$ is a partition of $(2n + 2) - (n + 1 + \ell(\lambda)) + (\ell(\lambda) - 1) = n$ with no parts of size 1. Also, the number of part sizes of $\lambda^*$ is exactly one less than that of $\lambda$ since $2 \lambda_1 > 2n + 2$ and thus $\lambda_1$ could not have been repeated in $\lambda$. If we let $D(\lambda)$ be the number of part sizes of $\lambda$, then $c(n)$ is given by
    \[f(n) = \sum_{\substack{\lambda \vdash 2n + 2 \\ rk(\lambda) = n + 1}} D(\lambda) = \sum_{\substack{\lambda^* \vdash n \\ m_1(\lambda) = 0}} (1 + D(\lambda^*)) = \sum_{\substack{\lambda^* \vdash n \\ m_1(\lambda) = 0}} 1 + \sum_{\substack{\lambda^* \vdash n \\ m_1(\lambda) = 0}} D(\lambda^*).\]
    Where $\mu \vdash n$ means that $\mu$ is a partition of $n$ and $m_1(\mu)$ is the number of parts of size 1 in $\mu$. Looking at the extreme right hand side, the previous two lemmas imply that the first sum is $p(n) - p(n - 1)$ and the second is $p(n - 2)$ proving the theorem.
\end{proof}

\section{Congruence Conjecture/Future Work}

In \cite{Dixit-Kumar-Srivastava}, Dixit, Kumar, and Srivastava conjectured a potential congruence modulo four for the number of non-Rascoe partitions, that is partitions into distinct parts where the length is not a part of the partition, of size $n$.

\begin{conjecture}[\cite{Dixit-Kumar-Srivastava} Conjecture 1]\label{Conjecture DKS}
    Let $b(n)$ denote the number of non-Rascoe partitions of an integer $n$. For $k \geq 1$ and $k$ not a multiple of 29, the following congruence holds:
    \[b(29k + 21) \equiv 0 \pmod 4.\]
\end{conjecture}

We further conjecture the following.

\begin{conjecture}\label{Conjecture C}
    Let $b(n)$ denote the number of non-Rascoe partitions of an integer $n$. For $j \geq 1$, the following congruence holds:
    \[b(29^2j + 21) + b(j) \equiv 0 \pmod 4.\]
\end{conjecture}

\begin{remark}
    In \cite{Dixit-Kumar-Srivastava}, Conjecture \ref{Conjecture DKS} was checked up to $n = 10^5$. We have checked both Conjecture \ref{Conjecture DKS} and Conjecture \ref{Conjecture C} up to $n = 10^6$.
\end{remark}


\begin{remark}
    While currently being unable to prove these conjectures in full, we note as to why $b(29k + 21)$ may act differently for $k$ a multiple of 29. Theorem 1.3 in \cite{Dixit-Kumar-Srivastava} gives that $b(n)$ is odd if and only if $n = m(5m + 1)/2$ for some integer $m$. Notice that if $n = m(5m + 1)/2$ then
    \[40n + 1 = (10m + 1)^2,\]
    so $40n + 1$ must be a perfect square. Notice then that
    \[40(29k + 21) + 1 = 29(40k + 29)\]
    can only be a perfect square if $k$ is also a multiple of 29. This does imply the modulo 2 version of Conjecture \ref{Conjecture C} since for $n = 29k + 21$ and $k = 29j$ then
    \begin{flalign*}
        40n + 1 &= 40(29k + 21) + 1 \\
        &= 29(40k + 29) \\
        &= 29(40 \cdot 29j + 29) \\
        &= 29^2(40j + 1)
    \end{flalign*}
    Thus $40n + 1$ is a perfect square precisely when $40j + 1$ is thus
    \[b(29^2j + 21) = b(n) \equiv b(j) \pmod 2.\]
    The same type of behavior exists for other progressions. For example, similar computations show that $b(11^2j + 3) \equiv b(j)$. This, of course, doesn't imply the entire modulo 4 behavior.
\end{remark}

In \cite{Dixit-Kumar-Srivastava}, Dixit, Kumar, and Srivastava posed additional problems none of which we are currently able to resolve but are of interest to the present author as well.


\end{document}